\newcommand{\del}{\mbox{del}\,}
\newcommand{\lk}{\mbox{lk}\,}
\newcommand{\reg}{\mbox{reg}\,}
\newcommand{\pd}{\mbox{pd}\,}
\newcommand{\T}{\mathrm}
\newtheorem{theorem}{Theorem}[section]
\newtheorem{corollary}[theorem]{Corollary}
\newtheorem{lemma}[theorem]{Lemma}
\newtheorem{proposition}[theorem]{Proposition}
\newtheorem{definition}[theorem]{Definition}
\numberwithin{equation}{section}
\begin{document}
\bibliographystyle{amsplain}

%\date{}
\title[codismantlability and Projective dimension]{codismantlability and projective dimension of the Stanley-Reisner ring of special hypergraphs}
\author[F. Khosh-Ahang and S. Moradi]{Fahimeh Khosh-Ahang and Somayeh Moradi}
%%\thanks{}
%\subjclass{13D02, 13P10, 13D40, 13A02}
\address{Fahimeh Khosh-Ahang, Department of Mathematics,
 Ilam University, P.O.Box 69315-516, Ilam, Iran.}
\email{fahime$_{-}$khosh@yahoo.com}
\address{Somayeh Moradi, Department of Mathematics,
 Ilam University, P.O.Box 69315-516, Ilam, Iran and School of Mathematics, Institute
 for Research in Fundamental Sciences (IPM), P.O.Box: 19395-5746, Tehran, Iran.} \email{somayeh.moradi1@gmail.com}

\keywords{codominated vertex, edge ideal, hypergraph, matching number, projective dimension, vertex
decomposable.\\
%$*$Corresponding author
}
\subjclass[2010]{Primary 13D02, 13P10;    Secondary 16E05}
%\subjclass[2010]{Primary 13D02, 13P10;    Secondary 16E05}
\begin{abstract}

\noindent  In this paper firstly, we generalize the concept of codismantlable graphs to hypergraphs and show that some special vertex decomposable hypergraphs are codismantlable. Then we generalize the concept of bouquet in graphs to hypergraphs to extend some combinatorial invariants of graphs about disjointness of a set of bouquets. We use these invariants to characterize the projective dimension of Stanley-Reisner ring of special hypergraphs in some sense.
\end{abstract}

\maketitle
\section*{Introduction}

Recently, in \cite{BC}, the authors introduced a new class of graphs in terms of a codominated vertex,  which they call codismantlable and studied some algebraic and combinatorial properties of
these graphs. They proved that a $(C_4,C_5)$-free vertex decomposable graph is codismantlable. In this paper,
we extend the concept of codismantlability to hypergraphs and gain some generalizations of the results for graphs in this context.

Corresponding to each hypergraph $\mathcal{H}$, a simplicial complex, called the \textbf{independence complex} of $\mathcal{H}$, is assigned. The independence complex of $\mathcal{H}$, which is denoted by  $\Delta_{\mathcal{H}}$, is one whose faces are the independent sets of vertices of $\mathcal{H}$, i.e. the sets which do not contain any edge of $\mathcal{H}$. Squarefree monomial ideals can be studied using these combinatorial concepts. Since edge ideal of graphs, as the easiest class of squarefree monomial ideals, are extensively studied (cf.  \cite{HD}, \cite{Kimura}, \cite{KM}, \cite{Mor}, \cite{VT} and \cite{Zheng}), many researchers have tried to extend the concepts in graphs to hypergraphs and to find more general results in hypergraphs (cf. \cite{Emt}, \cite{HT1}, \cite{HW},  \cite{MVi} and  \cite{Wood}). Explaining the algebraic properties of the Stanley-Reisner ring of $\mathcal{H}$ (such as projective dimension) in terms of the graphic invariants of $\mathcal{H}$ is the main purpose of many works \cite{DS,DHS,FV,LM,WW}.

The \textbf{Castelnuovo-Mumford regularity} (or simply regularity) and the projective dimension
of an $R$-module $M$ are respectively defined as
$$\reg(M) := \max\{j-i |\
\beta_{i,j}(M)\neq 0\},$$
and
$$\pd(M) := \max\{i |\
\beta_{i,j}(M)\neq 0 \ \T{for \ some \ } j\},$$
where $\beta_{i,j}$ is the $(i,j)$-th Betti number of $M$.

In this paper, we also study the projective dimension and regularity of the Stanley-Reisner ring of $\Delta_{\mathcal{H}}$ for some families of hypergraphs and relate them to some combinatorial concepts. Also, we generalize some results, which had been gained for graphs, such as Propositions 2.6 and 2.7 and Corollary 2.9 in \cite{KM}.

The paper proceeds as follows. In Section 1, we study codismantlable hypergraphs.
In fact, in Definition \ref{codis}, we generalize codominated vertex concept to hypergraphs. We show that under some circumstances a vertex $x$ of a hypergraph is a codominated vertex if and only if it is a shedding vertex (see Theorem \ref{main}). By using this fact, in Corollary \ref{coro}, we conclude that
a $C_5$-free vertex decomposable hypergraph, which all of its $3$-cycles have edges of cardinality just two, is codismantlable.
Hence, we generalize Lemma 2.6 in \cite{BC} and Lemma 2.3 in \cite{KM}.

In Section 2, we study the projective dimension of Stanley-Reisner ring of certain hypergraphs. To this aim, by generalizing the concept of bouquet from graphs to hypergraphs, we introduce some invariants such as $d_\mathcal{H}$ and $d'_\mathcal{H}$ for a hypergraph $\mathcal{H}$. We show that $c_\mathcal{H}\leq d_\mathcal{H}\leq d'_\mathcal{H}$ and for vertex decomposable hypergraph $\mathcal{H}$, $d'_\mathcal{H}$ is an upper bound for projective dimension of the Stanley-Reisner ring of $\mathcal{H}$. Also, we find special circumstances in Theorem \ref{final} under which, $d'_\mathcal{H}$ precisely characterizes $\T{bigheight}(I(\mathcal{H}))$ and projective dimension of the Stanley-Reisner ring of $\mathcal{H}$. In fact in this section we generalize Propositions 2.6, 2.7 and Corollary 2.9 in \cite{KM} in some sense.

\section{Codominated vertex, shedding vertex and codismantlable hypergraphs}

Throughout this paper, we assume that $\mathcal{H}=(V(\mathcal{H}), \mathcal{E}(\mathcal{H}))$ is a simple hypergraph with vertex set $V(\mathcal{H})$ and edge set $\mathcal{E}(\mathcal{H})$ whose elements are a family of subsets of $V(\mathcal{H})$ such that no element of $\mathcal{E}(\mathcal{H})$ contains another. Also, for any vertex $x\in V(\mathcal{H})$, $\mathcal{H}\setminus x$ is a hypergraph with vertex set $V(\mathcal{H})\setminus\{x\}$ and  edge set $\{E\in \mathcal{E}(\mathcal{H}):\ x\notin E\}$.
Moreover $\mathcal{H}/x$ is a hypergraph with vertex set  $V(\mathcal{H})\setminus\{x\}$ whose edges are the minimal elements (with respect to inclusion) of the set $\{E\setminus \{x\} \ : \ E\in \mathcal{E}(\mathcal{H})\}$. It is clear that $\mathcal{H}\setminus x$ and $\mathcal{H}/x$ are two simple hypergraphs.
They are called \textbf{deletion} and \textbf{contraction} of $\mathcal{H}$ by $x$, respectively. Note that for a vertex $x\in V(\mathcal{H})$, $\del_{\Delta_\mathcal{H}}(x)=\Delta_{\mathcal{H}\setminus x}$ and $\lk_{\Delta_{\mathcal{H}}}(x)=\Delta_{\mathcal{H}/x}$. Furthermore, we assume that $R=K[x_1, \dots, x_n]$, where $K$ is a field and $V(\mathcal{H})=\{x_1, \dots, x_n\}$. For more details in the context of hypergraphs we refer the reader to \cite{Berge}.

\begin{definition}
{\em Let $\Delta$ be a simplicial complex on the vertex set $V =
\{x_1,\ldots, x_n\}$. Then $\Delta$ is \textbf{vertex decomposable}
if either:

1) The only facet of $\Delta$ is $\{x_1,\ldots, x_n\}$, or
$\Delta=\emptyset$.

2) There exists a vertex $x\in V$ such that $\del_{\Delta}(x)$ and
$\lk_{\Delta}(x)$ are vertex decomposable, and such that every facet
of $\del_{\Delta}(x)$ is a facet of $\Delta$.}
\end{definition}

A vertex $x\in V$ for which every facet
of $\del_{\Delta}(x)$ is a facet of $\Delta$ is called a
\textbf{shedding vertex} of $\Delta$. Note that this is equivalent to say that no facet of $\lk_{\Delta}(x)$ is a facet of
$\del_{\Delta}(x)$.

A hypergraph $\mathcal{H}$ is called vertex decomposable, if the independence
complex $\Delta_{\mathcal{H}}$ is vertex decomposable and a vertex of $\mathcal{H}$ is called a shedding vertex if it is a shedding vertex of $\Delta_{\mathcal{H}}$. It is easily seen that if $x$ is a shedding vertex of $\mathcal{H}$ and $\{E_1, \dots, E_k\}$ is the set of all edges of $\mathcal{H}$ containing $x$, then every facet of $\mathcal{H}\setminus x$ contains $E_i\setminus \{x\}$ for some $1\leq i\leq k$.

Also, recall that a vertex $x$ of a graph $G$ is called codominated if there is a vertex $y$ in $G$ such that $N_G[y]\subseteq N_G[x]$, where $N_G[x]$ is the set of vertices of $G$ consisting $x$ and all neighbors of $x$.

Now, we extend the notion of codominated vertex to hypergraphs.
\begin{definition}\label{codis}
Let $x,y$ be two distinct vertices of the hypergraph $\mathcal{H}$. We set
 $$N_{\mathcal{H}}(x\setminus y)=\{E\setminus \{x\} \ : \ E\in \mathcal{E}(\mathcal{H}), x\in E, y\not\in E\}.$$
Also, assume that $x\in V(\mathcal{H})$ and $\{E_1, \dots, E_k\}$ is the set of all edges of $\mathcal{H}$ containing $x$. Then the vertex $x$ is called a \textbf{codominated} vertex if there is an integer $1\leq i \leq k$ such that for each vertex $y\in E_i\setminus \{x\}$ we have
$$N_{\mathcal{H}}(y\setminus x)\subseteq N_{\mathcal{H}}(x\setminus y).$$
\end{definition}

Note that this definition is a natural generalization of one in graph theory. Now, we are going to find relations between two concepts of shedding vertex and codominated vertex. The following lemma shows that in general every codominated vertex is a shedding vertex.

\begin{lemma}(Compare \cite[Lemma 2.6]{BC}.)\label{codominated}
In a hypergraph $\mathcal{H}$, every codominated vertex is a shedding vertex.
\end{lemma}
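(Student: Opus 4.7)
The plan is to argue directly from the definitions. Fix a codominated vertex $x$ and let $E_i$ be the edge witnessing codomination, so that for every $y\in E_i\setminus\{x\}$ one has $N_{\mathcal{H}}(y\setminus x)\subseteq N_{\mathcal{H}}(x\setminus y)$. I need to show that every facet $F$ of $\Delta_{\mathcal{H}\setminus x}=\del_{\Delta_{\mathcal{H}}}(x)$ is a facet of $\Delta_{\mathcal{H}}$. Since $F$ is an independent set of $\mathcal{H}$ (as the edges of $\mathcal{H}$ avoiding $x$ are exactly the edges of $\mathcal{H}\setminus x$, and $x\notin F$), and since the only candidate to enlarge $F$ to an independent set of $\mathcal{H}$ is $x$ itself, the problem reduces to exhibiting an edge of $\mathcal{H}$ contained in $F\cup\{x\}$.

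First I would dispose of the easy case: if $E_i\subseteq F\cup\{x\}$, we are done immediately. So assume there exists $y\in E_i$ with $y\notin F\cup\{x\}$; note $y\neq x$, hence $y\in E_i\setminus\{x\}$ and $y\notin F$. Since $F$ is a \emph{maximal} independent set of $\mathcal{H}\setminus x$ and $y\notin F$, the set $F\cup\{y\}$ must contain some edge $E\in\mathcal{E}(\mathcal{H})$ with $x\notin E$. As $F$ itself is independent in $\mathcal{H}$, this edge must use $y$, i.e.\ $y\in E$, and therefore $E\setminus\{y\}\subseteq F$. By construction $E\setminus\{y\}\in N_{\mathcal{H}}(y\setminus x)$.

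Now I would invoke the codomination hypothesis for this particular $y\in E_i\setminus\{x\}$: since $N_{\mathcal{H}}(y\setminus x)\subseteq N_{\mathcal{H}}(x\setminus y)$, there exists an edge $E'\in\mathcal{E}(\mathcal{H})$ with $x\in E'$, $y\notin E'$, and $E'\setminus\{x\}=E\setminus\{y\}$. Hence $E'=(E\setminus\{y\})\cup\{x\}\subseteq F\cup\{x\}$, producing the desired edge of $\mathcal{H}$ inside $F\cup\{x\}$. This forces $F$ to be a maximal independent set of $\mathcal{H}$, so $x$ is a shedding vertex.

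The main (mild) obstacle is simply keeping the bookkeeping straight: one must use maximality of $F$ in the deletion $\mathcal{H}\setminus x$ to produce an edge of $\mathcal{H}$ \emph{avoiding} $x$ and \emph{through} $y$, and then apply codomination in the correct direction to translate this edge into one through $x$ avoiding $y$. No non-trivial combinatorial machinery is required beyond the definitions of shedding vertex, facet of $\Delta_{\mathcal{H}\setminus x}$, and the sets $N_{\mathcal{H}}(\cdot\setminus\cdot)$.
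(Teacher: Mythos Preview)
Your proof is correct and uses essentially the same approach as the paper: pick $y\in E_i\setminus\{x\}$ outside the facet, use maximality to find an edge $E$ through $y$ avoiding $x$, and then apply the codomination inclusion $N_{\mathcal{H}}(y\setminus x)\subseteq N_{\mathcal{H}}(x\setminus y)$ to swap $y$ for $x$ and obtain the needed edge. The only cosmetic difference is that the paper verifies the equivalent formulation ``no facet of $\lk_{\Delta_{\mathcal{H}}}(x)$ is a facet of $\del_{\Delta_{\mathcal{H}}}(x)$'' (starting from a facet $S$ of $\Delta_{\mathcal{H}/x}$ and enlarging it by $y$ in $\Delta_{\mathcal{H}\setminus x}$), whereas you verify directly that every facet of $\del_{\Delta_{\mathcal{H}}}(x)$ is a facet of $\Delta_{\mathcal{H}}$; the two arguments are dual to one another and the combinatorial core is identical.
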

\begin{proof}
Assume that $x$ is a codominated vertex and $\{E_1, \dots, E_k\}$ is the set of all edges of $\mathcal{H}$ containing $x$. Then there is an integer $1\leq i \leq k$ such that for each vertex $y\in E_i\setminus \{x\}$ we have
$$N_{\mathcal{H}}(y\setminus x)\subseteq N_{\mathcal{H}}(x\setminus y).$$
Assume that $S$ is a facet of $\Delta_{\mathcal{H}/x}$. Then there is a vertex $y\in (E_i\setminus\{x\})\setminus S$. Now, if we show that $S\cup \{y\}$ is a face of $\Delta_{\mathcal{H}\setminus x}$, the proof will be done. To this end, suppose in contrary that there is an edge $E'$ of $\mathcal{H}$ such that $x\not\in E'$ and $E'$ is contained in $S\cup \{y\}$. Since $S$ is a facet of $\Delta_{\mathcal{H}/x}$, we should have $E'\setminus\{y\}\in N_{\mathcal{H}}(y\setminus x)$. So $E'\setminus\{y\}\in N_{\mathcal{H}}(x\setminus y)$ which means that $(E'\setminus\{y\})\cup\{x\}\in \mathcal{E}(\mathcal{H})$. Therefore $E'\setminus\{y\}\in \mathcal{E}(\mathcal{H}/x)$. But $E'\setminus\{y\}\subseteq S$ which is a contradiction. Hence $S\cup \{y\}$ is a face of $\Delta_{\mathcal{H}\setminus x}$ as required.
\end{proof}

In the following definition we recall the concept of a cycle hypergraph from \cite{Berge}.
\begin{definition}
A closed chain is called a \textbf{cycle}. More precisely, assume that $\mathcal{H}$ is a hypergraph, $x_1, \dots, x_n$ are distinct vertices of $\mathcal{H}$ and $E_1, \dots, E_n$ are distinct edges of $\mathcal{H}$ such that $x_1, x_n\in E_n$ and $x_i,x_{i+1}\in E_i$, for each $1\leq i\leq n-1$. Then we  call $x_1-E_1-x_2-E_2-\dots -x_n-E_n-x_1$ or briefly $E_1-\dots -E_n$ a cycle of length $n$ or an $n$-cycle and we denote it by $C_n$. We say that $\mathcal{H}$ is $C_n$-free if it doesn't contain any cycle $C_n$ as a subhypergraph.
\end{definition}

In \cite[Lemma 2.3]{KM}, it is shown that in a $C_5$-free graph, any shedding vertex is codominated. In the following we generalize this result to hypergraphs.

\begin{lemma}\label{shedding}
Assume that $\mathcal{H}$ is a $C_5$-free hypergraph which all of its $3$-cycles have edges of cardinality just two. Then every shedding vertex is a codominated vertex.
\end{lemma}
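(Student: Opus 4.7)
The plan is to prove the contrapositive. Suppose $x$ is shedding but not codominated; I will derive a contradiction. Let $\{E_1,\dots,E_k\}$ be the edges of $\mathcal{H}$ containing $x$. Negation of the codominated condition gives, for every $i$, a vertex $y_i\in E_i\setminus\{x\}$ and an edge $E'_i\in\mathcal{E}(\mathcal{H})$ with $y_i\in E'_i$, $x\notin E'_i$, and $(E'_i\setminus\{y_i\})\cup\{x\}\notin\mathcal{E}(\mathcal{H})$. Let $\{y^{(1)},\dots,y^{(m)}\}$ be the distinct values taken by the $y_i$'s, pick for each $j$ an index $i(j)$ with $y_{i(j)}=y^{(j)}$, and set $A^{(j)}=E'_{i(j)}\setminus\{y^{(j)}\}$ and $F_0=\bigcup_j A^{(j)}$.

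I aim to show that $F_0$ is an independent set of $\mathcal{H}\setminus x$ and contains no $y^{(j)}$. Granted this, extending $F_0$ to a facet $F$ of $\Delta_{\mathcal{H}\setminus x}$ forces $y^{(j)}\notin F$ for every $j$, since $A^{(j)}\cup\{y^{(j)}\}=E'_{i(j)}\in\mathcal{E}(\mathcal{H})$ and $F$ is maximal. Then no $E_i\setminus\{x\}$ lies in $F$, contradicting shedding.

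Both parts of the target reduce to forbidden cycles. For $y^{(j)}\notin F_0$: suppose $y^{(j)}\in A^{(\ell)}$ with $\ell\neq j$. The vertices $x,y^{(j)},y^{(\ell)}$ together with the edges $E_{i(j)},E'_{i(\ell)},E_{i(\ell)}$ form a $3$-cycle; the hypothesis on $3$-cycles makes these three edges have cardinality two, so $E_{i(j)}=\{x,y^{(j)}\}$ and $E'_{i(\ell)}=\{y^{(j)},y^{(\ell)}\}$, whence $(E'_{i(\ell)}\setminus\{y^{(\ell)}\})\cup\{x\}=E_{i(j)}\in\mathcal{E}(\mathcal{H})$, contradicting the choice of $E'_{i(\ell)}$. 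For independence of $F_0$: if an edge $E\in\mathcal{E}(\mathcal{H})$ with $x\notin E$ lies in $F_0$, either $E\subseteq A^{(j)}$ for some $j$ (so $E\subsetneq E'_{i(j)}$, impossible by simpleness), or the family $\{E\cap A^{(j)}\}_j$ is not totally ordered by inclusion (otherwise its maximum would equal $E$, forcing $E\subseteq A^{(j^\ast)}$), producing $j_1\neq j_2$ and vertices $a\in E\cap A^{(j_1)}\setminus A^{(j_2)}$, $b\in E\cap A^{(j_2)}\setminus A^{(j_1)}$. The first claim forces $a\neq y^{(j_2)}$ and $b\neq y^{(j_1)}$, so $a\notin E'_{i(j_2)}$ and $b\notin E'_{i(j_1)}$, and the closed chain
$$x-E_{i(j_1)}-y^{(j_1)}-E'_{i(j_1)}-a-E-b-E'_{i(j_2)}-y^{(j_2)}-E_{i(j_2)}-x$$
is a $5$-cycle, contradicting $C_5$-freeness.

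The main obstacle is that the $y_i$'s may coincide: a single vertex serving as $y_i$ for two distinct $i$'s would collapse the $5$-cycle above. Passing to the set of distinct representatives $\{y^{(j)}\}$ sidesteps this, and the hypothesis on $3$-cycles is exactly what powers the first claim, which in turn guarantees that $a$ and $b$ are separated from the $y^{(j)}$'s, so that the five vertices and five edges of the $5$-cycle are genuinely distinct.
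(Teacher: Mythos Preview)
Your proof is correct and follows essentially the same approach as the paper's: negate codomination to obtain witness vertices $y_i$ and edges $E'_i$, use the $3$-cycle hypothesis to show the $y$'s stay out of the sets $E'_\bullet\setminus\{y_\bullet\}$, use $C_5$-freeness to show the union of these sets is independent, then extend to a facet and invoke shedding for a contradiction. The only organizational difference is that the paper shows $S=\bigcup_i(E'_i\setminus\{y_i\})$ is independent in $\mathcal{H}/x$, extends to a facet of $\Delta_{\mathcal{H}/x}$, and then passes to $\Delta_{\mathcal{H}\setminus x}$, whereas you work directly in $\Delta_{\mathcal{H}\setminus x}$ and derive the contradiction from $y^{(j)}\notin F$; this is a minor streamlining, not a different method.
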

\begin{proof}
Assume that $x$ is a shedding vertex and  $\{E_1, \dots, E_k\}$ is the set of all edges containing $x$. Moreover, assume, in contrary, that for each $1\leq i \leq k$, there is $y_{i}\in E_i\setminus\{x\}$ and an edge $E_i'$ of $\mathcal{H}$ such that $y_i\in E_i'$, $x\not\in E_i'$ and $E_i'\setminus \{y_i\}\in N_{\mathcal{H}}(y_i\setminus x)\setminus N_{\mathcal{H}}(x\setminus y_i)$. So, $(E_i'\setminus \{y_i\})\cup \{x\}$ is not an edge of $\mathcal{H}$. Note that for two distinct integers $1\leq i,j\leq k$, if $y_i=y_j$, we can choose $E_i'=E_j'$. Also, if $y_i\neq y_j$, then we should have $\{y_i,y_j\}\not\subseteq E_i'$ and $\{y_i,y_j\}\not\subseteq E_j'$. Because otherwise, if for instance $\{y_i,y_j\}\subseteq E_i'$, then  $x - E_i - y_i -E_i' - y_j-E_j - x$ is a cycle which $|E_i'|>2$ or $|E_j|>2$ which contradicts with our assumption (note that if $|E_i'|=|E_j|=2$, then we should have $E_i'=\{y_i, y_j\}$ and  $E_j=\{x,y_j\}$ and hence, ($E_i'\setminus \{y_i\})\cup \{x\}=E_j$, which is an edge of $\mathcal{H}$ and contradicts with the choice of $E_i'$). Set $S=\bigcup_{i=1}^k (E_i'\setminus \{y_i\})$. Since $\mathcal{H}$ is $C_5$-free, $S$ is an independent set of vertices in $\mathcal{H}/x$. To be more precise, if $S$ is not an independent set of vertices in $\mathcal{H}/x$, there is an edge $E\setminus \{x\}$ of $\mathcal{H}/x$ contained in $S$. So, there are distinct edges $E_i'$ and $E_j'$ with $1\leq i,j \leq k$ and distinct vertices $z,w\in E\setminus \{x\}$ such that $z\in E\cap (E_i'\setminus\{y_i\})$ and $w\in E\cap (E_j'\setminus\{y_i\})$ (otherwise $E\setminus \{x\}\subseteq E_i'\setminus\{y_i\}$ for some $1\leq i\leq k$. If $x\notin E$, then $E=E\setminus \{x\}\subseteq E_i'\setminus\{y_i\} \subset E'_i$, which is impossible. If $x\in E$, then $E=E_j$ for some $1\leq j\leq k$ and
 $y_j\in E_j\setminus \{x\}=E\setminus \{x\}\subseteq E'_i\setminus \{y_i\}$. Thus $i\neq j$ and  $\{y_i,y_j\}\subseteq E'_i$, which is a contradiction). Therefore, one can easily check that
$w - E - z - E_i' - y_i - E_i - x - E_j - y_j - E_j' - w$ forms a cycle of length five in $\mathcal{H}$ which is a contradiction. Now, we extend $S$ to a facet $F$ of $\Delta _{\mathcal{H}/x}$. $F$ is a face of $\Delta _{\mathcal{H}\setminus x}$ and so it is contained in a facet $G$ of $\Delta _{\mathcal{H}\setminus x}$. Now, since $x$ is a shedding vertex, $G$ should contain $E_i\setminus \{x\}$ for some $1\leq i\leq k$. Hence, $E'_i\subseteq G$ which contradicts to the fact that $G$ is a facet of $\Delta _{\mathcal{H}\setminus x}$.
\end{proof}

Now, by means of Lemmas \ref{codominated} and \ref{shedding}, we have the following result, which is one of our main results of this paper.
\begin{theorem}\label{main}
Assume that $\mathcal{H}$ is a $C_5$-free hypergraph which all of its $3$-cycles have edges of cardinality just two. Then a vertex $x$ of $\mathcal{H}$ is a shedding vertex if and only if it is a codominated vertex.
\end{theorem}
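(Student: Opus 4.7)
The plan is essentially immediate: the theorem is exactly the conjunction of the two preceding lemmas, so the proof should just be a one-line citation. Lemma \ref{codominated} establishes, without any hypothesis on $\mathcal{H}$, that every codominated vertex of a hypergraph is a shedding vertex, which supplies the $(\Leftarrow)$ direction. Lemma \ref{shedding}, under precisely the hypotheses stated in the theorem (that $\mathcal{H}$ is $C_5$-free and that every $3$-cycle of $\mathcal{H}$ has edges of cardinality exactly two), gives the converse: every shedding vertex is codominated. Combining the two yields the biconditional for any vertex $x$ of $\mathcal{H}$.

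Since there is no additional content to extract, I would not introduce new arguments here; rather I would simply write: ``By Lemma \ref{codominated}, every codominated vertex of $\mathcal{H}$ is a shedding vertex, and this direction requires no assumption on $\mathcal{H}$. Conversely, since $\mathcal{H}$ satisfies the hypotheses of Lemma \ref{shedding} ($C_5$-freeness and all $3$-cycles having only edges of cardinality two), every shedding vertex of $\mathcal{H}$ is codominated. The two implications together give the stated equivalence.''

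The only conceptual obstacle lies in the previous lemma (\ref{shedding}), not here; once that combinatorial case analysis (using the $C_5$-free hypothesis to produce a five-cycle whenever codominatedness fails, and the cardinality hypothesis to rule out an otherwise troublesome $3$-cycle configuration) is in place, the theorem follows formally. Consequently my proposal is to present this theorem without any new argument, leaving the burden on the two supporting lemmas.
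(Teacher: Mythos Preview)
Your proposal is correct and matches the paper's approach exactly: the paper simply states that the theorem follows from Lemmas \ref{codominated} and \ref{shedding}, without giving any additional argument. Your explanation of which lemma handles which direction and under which hypotheses is precisely what is needed.
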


\begin{corollary}(Compare \cite[Theorem 2.7]{BC} and \cite[Lemma 2.3]{KM}.)
In a $C_5$-free graph, a vertex is a shedding vertex if and only if it is a codominated vertex.
\end{corollary}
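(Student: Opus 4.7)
The plan is to obtain this corollary as a direct specialization of Theorem \ref{main}. A (simple) graph is precisely a hypergraph in which every edge has cardinality exactly two, so the hypothesis ``all $3$-cycles have edges of cardinality just two'' is automatically satisfied for any graph. Consequently, a $C_5$-free graph is a $C_5$-free hypergraph meeting the hypotheses of Theorem \ref{main}, and the biconditional transfers verbatim.

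More concretely, I would first remark that the notions of shedding vertex and codominated vertex specialize correctly: for a graph $G$ viewed as a hypergraph $\mathcal{H}$, the sets $N_{\mathcal{H}}(x\setminus y)$ and $N_{\mathcal{H}}(y\setminus x)$ reduce to sets of singleton complements of the edges through $x$ (resp.\ $y$) avoiding $y$ (resp.\ $x$), so the inclusion $N_{\mathcal{H}}(y\setminus x)\subseteq N_{\mathcal{H}}(x\setminus y)$ recovers the classical condition $N_G[y]\subseteq N_G[x]$ (up to the vertex $x$ and $y$ themselves). In particular, Definition \ref{codis} agrees with the usual notion of a codominated vertex in a graph.

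I would then simply invoke Theorem \ref{main}: since $G$ is $C_5$-free as a hypergraph and every edge has cardinality two (so the condition on $3$-cycles is vacuously met), a vertex $x$ of $G$ is a shedding vertex of $\Delta_G$ if and only if it is a codominated vertex of $G$. No further argument is needed; there is no real obstacle here, as the whole content of the corollary is encapsulated in Theorem \ref{main}. The only thing one must verify is that the graph-theoretic notions of codominated and shedding vertex coincide with the hypergraph-theoretic notions used in the theorem, which is immediate from the definitions.
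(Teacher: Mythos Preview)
Your proposal is correct and takes essentially the same approach as the paper: the corollary is stated immediately after Theorem \ref{main} with no proof, as it is a direct specialization to the case where every edge has cardinality two (so the $3$-cycle hypothesis is automatic). Your additional remark verifying that Definition \ref{codis} recovers the classical graph-theoretic notion of a codominated vertex is a helpful sanity check, but the paper treats this as evident and simply records the corollary without further comment.
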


In the following, we generalize the concept of codismantlability to hypergraphs.
\begin{definition}(Compare \cite[Definition 2.2]{BC}.)
Given two hypergraphs $\mathcal{G}$ and $\mathcal{H}$, we say that $\mathcal{G}$ is codismantlable to $\mathcal{H}$ if there exist hypergraphs $\mathcal{H}_0, \mathcal{H}_1, \dots, \mathcal{H}_{k+1}$ satisfying $\mathcal{G}\cong \mathcal{H}_0$, $\mathcal{H}\cong \mathcal{H}_{k+1}$ and $\mathcal{H}_{i+1}=\mathcal{H}_i\setminus x_i$, for each $0\leq i \leq k$, where $x_i$ is a codominated vertex of $\mathcal{H}_i$. A hypergraph $\mathcal{H}$ is called \textbf{codismantlable} if either it is an edgeless hypergraph or it is codismantlable to an edgeless hypergraph.
\end{definition}

The next corollary, which is a generalization of Corollary 2.9 in \cite{BC}, illustrates that certain vertex decomposable hypergraphs are codismantlable.

\begin{corollary} \label{coro}
Assume that $\mathcal{H}$ is a $C_5$-free vertex decomposable hypergraph which all of its $3$-cycles have edges of cardinality just two. Then $\mathcal{H}$ is codismantlable.
\end{corollary}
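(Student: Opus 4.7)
The plan is to prove the corollary by induction on the number of vertices of $\mathcal{H}$, using Theorem \ref{main} at the inductive step to convert the shedding vertex provided by vertex decomposability into a codominated vertex, and then verify that the inductive hypothesis applies to the deletion.

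For the base case, if $\mathcal{H}$ has no edges then it is edgeless and hence codismantlable by definition. For the inductive step, assume $\mathcal{H}$ contains at least one edge $E$. Then $E \notin \Delta_{\mathcal{H}}$, so $\Delta_{\mathcal{H}}$ is not the full simplex on $V(\mathcal{H})$ and not empty, which forces the second clause of the definition of vertex decomposability to apply. In particular there exists a shedding vertex $x$ of $\Delta_{\mathcal{H}}$, that is, a shedding vertex of $\mathcal{H}$, such that $\del_{\Delta_{\mathcal{H}}}(x) = \Delta_{\mathcal{H}\setminus x}$ is vertex decomposable. Since $\mathcal{H}$ is $C_5$-free and its $3$-cycles have only cardinality-two edges, Theorem \ref{main} implies that $x$ is a codominated vertex of $\mathcal{H}$.

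It remains to check that $\mathcal{H}\setminus x$ satisfies the same hypotheses as $\mathcal{H}$ so that the induction hypothesis is available. Vertex decomposability of $\mathcal{H}\setminus x$ follows from the preceding paragraph. Also, any cycle of $\mathcal{H}\setminus x$ is a cycle of $\mathcal{H}$ (with the same edges), hence $\mathcal{H}\setminus x$ is $C_5$-free, and every $3$-cycle of $\mathcal{H}\setminus x$ is a $3$-cycle of $\mathcal{H}$, so its edges all have cardinality two. By induction, $\mathcal{H}\setminus x$ is codismantlable; chaining the codismantling sequence for $\mathcal{H}\setminus x$ after the single step $\mathcal{H} \rightsquigarrow \mathcal{H}\setminus x$ (which is legal because $x$ is codominated in $\mathcal{H}$) shows that $\mathcal{H}$ is codismantlable.

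I do not foresee a serious obstacle here: all the combinatorial content is concentrated in Theorem \ref{main}, and the only nontrivial verification is the existence of a shedding vertex when $\mathcal{H}$ has at least one edge, which follows immediately from the fact that such an edge prevents $\Delta_{\mathcal{H}}$ from being a full simplex. The hereditary nature of being $C_5$-free and of the $3$-cycle condition under vertex deletion is clear since deletion only removes edges.
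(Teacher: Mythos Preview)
Your proof is correct and is exactly the induction the paper has in mind: the paper's own proof is the single sentence ``The result can be easily gained by an induction and Theorem \ref{main},'' and you have simply written out that induction, including the routine check that the $C_5$-free and $3$-cycle hypotheses pass to $\mathcal{H}\setminus x$.
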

\begin{proof}
The result can be easily gained by an induction and Theorem \ref{main}.
\end{proof}

So, in graph theory, we have the following remarkable result.
\begin{corollary}(Compare \cite[Corollary 2.9]{BC}.)
Every $C_5$-free vertex decomposable graph is codismantlable.
\end{corollary}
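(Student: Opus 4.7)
The plan is to observe that this corollary is an immediate specialization of Corollary \ref{coro} to the case where the hypergraph $\mathcal{H}$ is an ordinary graph. First, I would note that a graph is precisely a simple hypergraph in which every edge has cardinality exactly two. Consequently, if $G$ is a $C_5$-free vertex decomposable graph, viewed as a hypergraph, then every edge of every $3$-cycle of $G$ has cardinality two, so the hypothesis of Corollary \ref{coro} concerning $3$-cycles is vacuously met.

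With both hypotheses of Corollary \ref{coro} satisfied (namely, $C_5$-freeness and the cardinality condition on $3$-cycles), I would invoke that corollary directly to conclude that $G$ is codismantlable. No additional induction or construction is needed beyond unwinding definitions.

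The only conceivable obstacle is a subtle compatibility issue: one must check that the notion of codominated vertex from Definition \ref{codis} restricts, on $2$-uniform hypergraphs, to the usual notion of a codominated vertex in a graph (so that codismantlability in the hypergraph sense agrees with codismantlability in the graph sense). This was already indicated in the remark following Definition \ref{codis} (that it is a natural generalization of the graph-theoretic notion), so I would simply cite this and be done. Overall, the proof reduces to one line: apply Corollary \ref{coro}.
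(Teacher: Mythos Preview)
Your proposal is correct and matches the paper's approach exactly: the paper presents this corollary without proof, as an immediate specialization of Corollary~\ref{coro} to graphs (where the $3$-cycle condition is automatic since every edge has cardinality two). Your remark about compatibility of the codominated-vertex notions is a reasonable sanity check, and the paper handles it the same way, by noting after Definition~\ref{codis} that the hypergraph definition naturally generalizes the graph-theoretic one.
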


\section{Projective dimension of edge ideal of certain hypergraphs}
In this section, we are going to characterize or even find some bounds for the projective dimension of edge ideal of special hypergraphs up to our ability. To this end, firstly we generalize some concepts from graphs to hypergraphs so that we generalize some combinatorial invariants of hypergraphs as follows.

\begin{definition}
A hypergraph $\mathcal{H}$ is called a \textbf{bouquet} if $\bigcap_{E\in \mathcal{E}(\mathcal{H})}E\neq \emptyset$. In this case, if $\mathcal{H}$ has at least two edges, then all elements in $\bigcap_{E\in \mathcal{E}(\mathcal{H})}E$ are called the \textbf{roots} of $\mathcal{H}$, all of its edges are called the \textbf{stems} of $\mathcal{H}$ and the elements of $\bigcup_{E\in \mathcal{E}(\mathcal{H})}(E\setminus \bigcap_{E\in \mathcal{E}(\mathcal{H})}E)$ are called the \textbf{flowers} of $\mathcal{H}$. When $\mathcal{H}$ has only one edge $E$, any proper subset of $E$ can be considered as roots and its complement as flowers of the bouquet. A subhypergraph of a simple hypergraph $\mathcal{T}$ which is a bouquet is called a bouquet of $\mathcal{T}$. Let
$\mathcal{B}=\{B_1,\ldots,B_n\}$ be a set of bouquets of $\mathcal{H}$. We
use the following notations.
$$\ \ \ F(\mathcal{B})=\{w\in V(\mathcal{H})\ | \ w \text{ is a flower of some bouquet in } \mathcal{B}\}$$
$$R(\mathcal{B})=\{z\in V(\mathcal{H}) \ | \ z \text{ is a root of some bouquet in } \mathcal{B}\}$$
$$S(\mathcal{B})=\{E\in E(\mathcal{H}) \ | \ E \text{ is a stem of some bouquet in } \mathcal{B}\}$$
\end{definition}

Kimura in \cite{Kimura} introduced two notions of disjointness
of a set of bouquets in graphs. In the following, we generalize these notions to hypergraphs.

\begin{definition}(See \cite[Definitions 2.1 and 5.1]{Kimura}.)
A set of bouquets $\mathcal{B}=\{B_1,\ldots,B_n\}$ is called
\textbf{strongly disjoint}  in $\mathcal{H}$ if we can choose a stem $E_i$ from each bouquet $B_i\in
\mathcal{B}$ such that $\{E_1,\ldots,E_n\}$ is an induced matching in $\mathcal{H}$.

A set of bouquets $\mathcal{B}=\{B_1,\ldots,B_n\}$ is called
\textbf{semi-strongly disjoint}  in $\mathcal{H}$ if
$R(\mathcal{B})$ is an independent set of vertices in $\mathcal{H}$.

Now, set $$d_\mathcal{H}:=\max\{|F(\mathcal{B})| \ | \ \mathcal{B}\
\text{is a strongly disjoint set of bouquets of } \mathcal{H} \}$$ and
$$\ \ \ \ \ \ d'_\mathcal{H}:=\max\{|F(\mathcal{B})| \ | \ \mathcal{B}\ \text{is a semi-strongly disjoint set of bouquets of } \mathcal{H} \}.$$
\end{definition}

Note that when $G$ is a graph, to define $d_G$ and $d'_G$, the condition $(i)$ in Definitions 2.1 and 5.1 in \cite{Kimura} is redundant. Therefore, the above definitions are suitable generalizations to hypergraphs and when $\mathcal{H}$ is a graph, they are coincide to those in graphs defined by Kimura in \cite{Kimura}.

Now, we are going to compare some graphical invariants together. So, firstly we recall the following definition and theorem.

\begin{definition}(See \cite[Definition 1.1]{KM1}.)
A set $\{E_1, \dots, E_k\}$ of edges of a hypergraph $\mathcal{H}$ is called a \textbf{semi induced matching} if the only edges contained in $\bigcup_{\ell=1}^kE_\ell$ are $E_1, \dots, E_k$. A semi induced matching which all of its elements are mutually disjoint is called an \textbf{induced matching}. Also, we set
$$c_{\mathcal{H}}:=\max\{|\bigcup_{\ell=1}^kE_\ell|-k \ : \ \{E_1, \dots, E_k\} \ \T{is \ an \ induced \ matching \ in \ }\mathcal{H}\},$$
$$c'_{\mathcal{H}}:=\max\{|\bigcup_{\ell=1}^kE_\ell|-k \ : \ \{E_1, \dots, E_k\} \ \T{is \ a \ semi \ induced \ matching \ in \ }\mathcal{H}\},$$
and we call them \textbf{induced matching number} and \textbf{semi induced matching number} of $\mathcal{H}$, respectively.
\end{definition}

\begin{theorem}(See \cite[Theorems 1.4 and 2.6]{KM1}\label{graph}
\begin{itemize}
\item[(i)] If $G$ is a simple graph, then we have $c_G=c'_G$.
\item[(ii)] Let $\mathcal{H}$ be a $(C_2,C_5)$-free vertex decomposable hypergraph.Then
$$\T{reg}(R/I_{\Delta_{\mathcal{H}}})\leq c'_{\mathcal{H}}\leq \T{dim}(\Delta_{\mathcal{H}})+1.$$
\end{itemize}
\end{theorem}

Now, the following result shows that $d_\mathcal{H}$ and $d'_\mathcal{H}$ are comparable with $c_\mathcal{H}$ and $c'_\mathcal{H}$.
\begin{proposition}\label{cd}
\begin{itemize}
\item[(i)] For each hypergraph $\mathcal{H}$, we have
$$c_\mathcal{H}\leq d_\mathcal{H}\leq d'_\mathcal{H}.$$
Specially when $G$ is a graph, we have
$$c_G=c'_G\leq d_G\leq d'_G.$$
\item[(ii)] If $\mathcal{H}$ is a $C_2$-free hypergraph, then
$$c_\mathcal{H}\leq c'_\mathcal{H}\leq d'_\mathcal{H}.$$
\end{itemize}
\end{proposition}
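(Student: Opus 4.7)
For part (i), each inequality unpacks the definitions. For $c_\mathcal{H}\leq d_\mathcal{H}$, I take an induced matching $\{E_1,\ldots,E_k\}$ realising $c_\mathcal{H}$ and consider the family $\mathcal{B}=\{\{E_1\},\ldots,\{E_k\}\}$ of single-edge bouquets, each equipped with an arbitrary one-vertex root $\{r_\ell\}\subsetneq E_\ell$. It is strongly disjoint (its chosen stems are the $E_\ell$'s themselves), and its flower set is $\bigcup_\ell (E_\ell\setminus\{r_\ell\})$, of cardinality $|\bigcup E_\ell|-k=c_\mathcal{H}$. For $d_\mathcal{H}\leq d'_\mathcal{H}$, I check that any strongly disjoint collection is semi-strongly disjoint. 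If stems $E_1,\ldots,E_n$ form an induced matching, then $R(\mathcal{B})\subseteq\bigcup E_i$; any edge $F\subseteq R(\mathcal{B})$ is, by the induced matching property, some $E_j$, and since the $E_i$'s are pairwise disjoint we have $E_j\subseteq R(\mathcal{B})\cap E_j=R(B_j)\subsetneq E_j$, a contradiction. The graph specialisation then follows using $c_G=c'_G$ from Theorem \ref{graph}.

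For part (ii), the inequality $c_\mathcal{H}\leq c'_\mathcal{H}$ is immediate since every induced matching is a semi induced matching; the content is $c'_\mathcal{H}\leq d'_\mathcal{H}$. I plan to take a semi induced matching $\{E_1,\ldots,E_k\}$ realising $c'_\mathcal{H}$, set $V'=\bigcup E_\ell$, and construct an \emph{independent transversal} $R\subseteq V'$, by which I mean an independent set of $\mathcal{H}$ meeting every $E_i$ with $|R|\leq k$. From such an $R$ I build $\mathcal{B}=\{B_r:r\in R\}$, where each $E_i$ is assigned to some chosen $r\in R\cap E_i$ and $B_r$ consists of the edges assigned to $r$. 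The $C_2$-free hypothesis guarantees that any two stems of $B_r$ intersect in exactly $\{r\}$, so $\bigcap B_r=\{r\}$ and $R(B_r)=\{r\}$; consequently $R(\mathcal{B})\subseteq R$ is independent, and
\[
|F(\mathcal{B})|\;\geq\;|V'\setminus R|\;\geq\;|V'|-k\;=\;c'_\mathcal{H}.
\]

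The heart of the argument is producing $R$, which I plan to do by induction on $k$. By induction I obtain an independent transversal $R'$ of size at most $k-1$ for a suitable $(k-1)$-edge sub-matching; if $R'\cap E_k\neq\emptyset$ then $R=R'$ works, otherwise I adjoin a single vertex $v\in E_k$. The $C_2$-free hypothesis controls the possible obstructions: a ``bad'' $v\in E_k$, meaning one for which $R'\cup\{v\}$ contains some edge $E_m$, must satisfy $E_m\cap E_k=\{v\}$ (as $|E_m\cap E_k|\leq 1$ and $v\in E_m\cap E_k$), and $E_m\subseteq V'$ forces $m\leq k$ by the semi induced matching condition; so distinct bad vertices come from distinct edges $E_m$ of the matching. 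The main obstacle I anticipate is the degenerate scenario in which every $v\in E_k$ is simultaneously bad; I plan to dispose of it by re-choosing $R'$ so that $R'\cap E_k\neq\emptyset$ from the outset, exploiting the flexibility in which $(k-1)$-sub-matching one applies the inductive hypothesis to (i.e.\ removing one of the threatening $E_m$'s rather than $E_k$), and failing that, by leveraging the rich edge configuration produced in the bad case to contradict the semi-induced-ness of $\{E_1,\ldots,E_k\}$ itself.
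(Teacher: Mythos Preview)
Your treatment of part~(i) is correct and essentially identical to the paper's argument.

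For part~(ii), however, your inductive construction of the independent transversal $R$ has a genuine gap. The statement you are implicitly inducting on---``every semi induced matching of size $k$ in a $C_2$-free hypergraph admits an independent set $R\subseteq V'$ of size at most $k$ meeting every $E_i$''---is simply false. The triangle $E_1=\{1,2\}$, $E_2=\{2,3\}$, $E_3=\{1,3\}$ is a $C_2$-free semi induced matching with $k=3$, yet any subset of $\{1,2,3\}$ meeting all three edges has size $\ge 2$ and then contains an edge. Your bad case therefore genuinely occurs, and neither of your fallback plans resolves it: re-choosing which edge to drop still leaves every vertex of the remaining edge bad (check all three choices), and there is nothing to contradict since $\{E_1,E_2,E_3\}$ \emph{is} a semi induced matching. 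What rescues the inequality $c'_\mathcal{H}\le d'_\mathcal{H}$ here is only that this matching does not realise $c'_\mathcal{H}$ (it gives $3-3=0<1$); but your induction does not track that information, and $(k-1)$-edge sub-matchings of a matching realising $c'_\mathcal{H}$ are in general not semi induced matchings themselves, so you cannot simply restrict the hypothesis.

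The paper sidesteps the induction entirely. Starting from a semi induced matching realising $c'_\mathcal{H}$, it first discards every $E_j$ with $E_j\subseteq\bigcup_{\ell\neq j}E_\ell$, and then greedily builds the bouquets in one pass: each $E_i$ is appended to an existing $B_j$ whenever $E_i$ meets the current intersection $\bigcap_{E\in B_j}E$, and otherwise starts a new bouquet. The $C_2$-free hypothesis forces every multi-stem bouquet to have a one-point root, so $|R(\mathcal{B})|\le k$, and the semi-induced property of the \emph{original} matching is invoked to see that $R(\mathcal{B})$ is independent; then $|F(\mathcal{B})|\ge|\bigcup E_\ell|-k=c'_\mathcal{H}$. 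In effect the paper constructs your transversal as the root set of a greedy bouquet partition rather than vertex-by-vertex, and the preliminary pruning step is exactly what eliminates triangle-like obstructions.
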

\begin{proof}
\begin{itemize}
\item[(i)] It is easy to see that if $\{E_1, \dots, E_k\}$ is an induced matching in $\mathcal{H}$, then one can
consider it as a set of bouquets with only one stem and one vertex as their roots. So, the number of its flowers equals to $c_\mathcal{H}$. Also, one may see that every strongly disjoint set of bouquets in $\mathcal{H}$ is semi-strongly disjoint. Hence, we have $c_\mathcal{H}\leq d_\mathcal{H}\leq d'_\mathcal{H}$ as desired.
The last assertion immediately follows from Theorem \ref{graph}(i).
\item[(ii)] Since every induced matching of $\mathcal{H}$ is a semi induced matching, it is enough to prove the second inequality. In this regard,
assume that $\{E_1, \dots, E_k\}$ is a semi-induced matching in $\mathcal{H}$ such that $c'_\mathcal{H}=|\bigcup_{\ell=1}^kE_\ell |-k$. At first, remove all edges $E_j$ which is contained in $\bigcup_{\ell=1,\ell\neq j}^kE_\ell$. Then set $s=0$ and assume that $B_0$ is a bouquet in $\mathcal{H}$ with $\mathcal{E}(B_0)=\emptyset$. Then for $i =1, \dots, k$, if $E_i \cap (\bigcap_{E_\ell\in \mathcal{E}(B_j)}E_\ell)\neq\emptyset$ for some $j<s+1$, then set $\mathcal{E}(B_j):=\mathcal{E}(B_j)\cup \{E_i\}$ (note that if there exist more than one $j<s+1$ such that $E_i \cap (\bigcap_{E_\ell\in \mathcal{E}(B_j)}E_\ell)\neq\emptyset$, then we add $E_i$ to edges of just one of these $B_j$s); else consider $B_{s+1}$ as a bouquet with $\mathcal{E}(B_{s+1})=\{E_i\}$ and set $s:=s+1$. Now, since $\{E_1, \dots, E_k\}$ is a semi-induced matching in $\mathcal{H}$, $\mathcal{B}=\{B_1, \dots, B_s\}$ is a semi-strongly disjoint set of bouquets in $\mathcal{H}$. Also, we have
$$|F(\mathcal{B})|\geq |(\bigcup_{\ell=1}^kE_\ell)\setminus R(\mathcal{B})|=|\bigcup_{\ell=1}^kE_\ell|-|R(\mathcal{B})|.$$
Now, since $\mathcal{H}$ is $C_2$-free, by considering only one vertex as the root of bouquets with one stem, we have $|R(\mathcal{B})|\leq k$. Hence, $$|F(\mathcal{B})|\geq |\bigcup_{\ell=1}^kE_\ell|- k=c'_\mathcal{H},$$ which implies that $c'_\mathcal{H}\leq d'_\mathcal{H}$ as required.
\end{itemize}
\end{proof}

The following corollary is an immediate consequence of part (ii) of Theorem \ref{graph} and part (ii) of Proposition \ref{cd}.
\begin{corollary}
Assume that $\mathcal{H}$ is a $(C_2,C_5)$-free vertex decomposable hypergraph. Then
$$c_\mathcal{H}\leq \T{reg}(R/I_{\Delta_\mathcal{H}})\leq c'_\mathcal{H}\leq d'_\mathcal{H}.$$
So, if moreover $c_\mathcal{H}=d'_\mathcal{H}$, then
$$\T{reg}(R/I_{\Delta_\mathcal{H}})=c_\mathcal{H}=d'_\mathcal{H}.$$
\end{corollary}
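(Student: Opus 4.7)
The plan is to simply chain together the three ingredients that have already been established. First, since $\mathcal{H}$ is a simple hypergraph, the lower bound $c_{\mathcal{H}} \leq \T{reg}(R/I_{\Delta_{\mathcal{H}}})$ comes directly from \cite[Corollary 3.9(a)]{MVi}, which is cited throughout the paper and requires no hypothesis beyond $\mathcal{H}$ being a simple hypergraph. Second, since $\mathcal{H}$ is assumed to be $(C_2,C_5)$-free and vertex decomposable, Theorem \ref{reg} applies verbatim and gives $\T{reg}(R/I_{\Delta_{\mathcal{H}}}) \leq c'_{\mathcal{H}}$. Third, $C_2$-freeness alone is enough to invoke Proposition \ref{cd}(ii), which yields $c'_{\mathcal{H}} \leq d'_{\mathcal{H}}$. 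Splicing these three inequalities together produces the desired chain
\[
c_{\mathcal{H}} \leq \T{reg}(R/I_{\Delta_{\mathcal{H}}}) \leq c'_{\mathcal{H}} \leq d'_{\mathcal{H}}.
\]

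For the second assertion, I would simply observe that if $c_{\mathcal{H}} = d'_{\mathcal{H}}$, then the extremes of the above chain coincide, forcing every intermediate inequality to be an equality. In particular $\T{reg}(R/I_{\Delta_{\mathcal{H}}}) = c_{\mathcal{H}} = d'_{\mathcal{H}}$.

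There is really no obstacle here; the proof is purely a matter of citing the correct hypotheses at the correct step. The only thing to double-check is that Proposition \ref{cd}(ii) applies (which needs only $C_2$-freeness, a weaker hypothesis than $(C_2,C_5)$-free plus vertex decomposable) and that Theorem \ref{reg} applies (which needs exactly the hypothesis stated in the corollary). Both are clearly satisfied, so the proof will consist of one short sentence chaining the three bounds followed by one sentence deducing the equality case.
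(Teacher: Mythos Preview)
Your proposal is correct and matches the paper's own argument essentially verbatim: the paper states that the corollary is ``an immediate consequence of Theorem \ref{reg} and part (ii) of Proposition \ref{cd},'' which is exactly the chain you assemble, with the lower bound from \cite[Corollary 3.9(a)]{MVi} (already invoked repeatedly earlier in the paper) supplying the first inequality. There is nothing to add or correct.
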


For our main result of this section, we need the following two lemmas.
\begin{lemma}\label{1}
Assume that $\mathcal {H}$ is a hypergraph and $x$ is a shedding vertex of $\mathcal{H}$. Then $d'_{\mathcal{H}/x}\leq d'_\mathcal{H}$.
\end{lemma}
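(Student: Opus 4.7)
The plan is to take a semi-strongly disjoint set $\mathcal{B}=\{B_1,\dots,B_n\}$ of bouquets of $\mathcal{H}/x$ with $|F(\mathcal{B})|=d'_{\mathcal{H}/x}$ and to construct a semi-strongly disjoint set $\mathcal{B}'=\{B'_1,\dots,B'_n\}$ of bouquets of $\mathcal{H}$ with $|F(\mathcal{B}')|\geq|F(\mathcal{B})|$, by lifting each stem of each $B_i$ to an edge of $\mathcal{H}$.

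The preliminary observation is that, for every $F\in\mathcal{E}(\mathcal{H}/x)$, the minimality built into the definition of $\mathcal{E}(\mathcal{H}/x)$ together with the simplicity of $\mathcal{H}$ forces exactly one of $F$ or $F\cup\{x\}$ to belong to $\mathcal{E}(\mathcal{H})$; I would denote this unique lift by $\widehat{F}$. I would then define $B'_i$ to be the bouquet in $\mathcal{H}$ whose stems are the lifts $\widehat{F}$ of the stems $F$ of $B_i$. Since $\bigcap_{F\in\mathcal{E}(B_i)}\widehat{F}\supseteq\bigcap_{F\in\mathcal{E}(B_i)}F\neq\emptyset$, each $B'_i$ is indeed a bouquet; when $B_i$ has a single stem I would retain the same choice of roots as in $B_i$.

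Next I would verify the two required conditions by a case analysis on each $B_i$ according to whether all of its stems lift without adding $x$, all lift with $x$, or there is a mix. In every case one directly computes that the roots of $B'_i$ are obtained from those of $B_i$ by possibly adjoining $x$, and that the flowers of $B'_i$ contain those of $B_i$. Taking unions over $i$ gives $R(\mathcal{B}')\subseteq R(\mathcal{B})\cup\{x\}$ and $F(\mathcal{B}')\supseteq F(\mathcal{B})$. The independence of $R(\mathcal{B})\cup\{x\}$ in $\mathcal{H}$ I would establish by contradiction: any edge $E\in\mathcal{E}(\mathcal{H})$ contained in it satisfies $E\setminus\{x\}\subseteq R(\mathcal{B})$, and then a minimal element of $\{G\setminus\{x\}:G\in\mathcal{E}(\mathcal{H})\}$ sitting inside $E\setminus\{x\}$ is an edge of $\mathcal{H}/x$ contained in $R(\mathcal{B})$, contradicting independence of $R(\mathcal{B})$ in $\mathcal{H}/x$.

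The main obstacle is the bookkeeping for the mixed case: when some lifted stems of $B_i$ contain $x$ and others do not, $x$ belongs to $\bigcup_{F\in\mathcal{E}(B_i)}\widehat{F}$ but not to $\bigcap_{F\in\mathcal{E}(B_i)}\widehat{F}$, so $x$ is forced into the flowers of $B'_i$; whereas if every stem of $B_i$ lifts with $x$ and $B_i$ has at least two stems, then $x$ is instead forced into the roots. Once these three subcases are sorted out, the chain $d'_\mathcal{H}\geq|F(\mathcal{B}')|\geq|F(\mathcal{B})|=d'_{\mathcal{H}/x}$ completes the argument. Notably, this plan does not seem to invoke the shedding hypothesis directly; the authors may well exploit shedding to streamline the construction or to handle a subtlety I have overlooked.
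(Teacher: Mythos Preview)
Your proposal is correct and follows essentially the same approach as the paper: the authors also lift each stem $E_{i,j}\setminus\{x\}$ of each $B_i$ back to $E_{i,j}\in\mathcal{E}(\mathcal{H})$, declare $\mathcal{B}'=\{B'_1,\dots,B'_n\}$ with $S(B'_i)=\{E_{i,1},\dots,E_{i,m_i}\}$, and assert (with the word ``clearly'') that $\mathcal{B}'$ is semi-strongly disjoint in $\mathcal{H}$ with $|F(\mathcal{B}')|\geq d'_{\mathcal{H}/x}$. Your case analysis and your independence argument for $R(\mathcal{B})\cup\{x\}$ supply exactly the details the paper omits, and your observation that the shedding hypothesis is not invoked is accurate---the paper's proof does not use it either.
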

\begin{proof}
Suppose that $\mathcal{B}=\{B_1,\ldots,B_n\}$ is a semi-strongly disjoint set of bouquets of $\mathcal{H}/x$ such that $d'_{\mathcal{H}/x}=|F(\mathcal{B})|$ and for each $1\leq i \leq n$, $S(B_i)=\{E_{i,1}\setminus \{x\}, \dots, E_{i,m_i}\setminus \{x\}\}$. Then if for each $1\leq i \leq n$ we set $B'_i$ as a bouquet in $\mathcal{H}$ with $S(B'_i)=\{E_{i,1}, \dots, E_{i,m_i}\}$, then clearly $\mathcal{B'}=\{B'_1,\ldots,B'_n\}$ is a semi-strongly disjoint set of bouquets of $\mathcal{H}$ with $|F(\mathcal{B'})|\geq d'_{\mathcal{H}/x}$. Hence, $d'_{\mathcal{H}/x}\leq d'_\mathcal{H}$ as required.
\end{proof}

\begin{lemma}\label{2}
Assume that $\mathcal {H}$ is a hypergraph and $x$ is a shedding vertex of $\mathcal{H}$. Then $d'_{\mathcal{H}\setminus x}+1\leq d'_\mathcal{H}$.
\end{lemma}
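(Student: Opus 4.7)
The plan is to take a semi-strongly disjoint set of bouquets of $\mathcal{H}\setminus x$ witnessing $d'_{\mathcal{H}\setminus x}$ and enlarge it by one bouquet whose single flower is $x$, using the shedding property to guarantee that this new bouquet fits. Concretely, let $\mathcal{B}=\{B_1,\ldots,B_n\}$ be semi-strongly disjoint in $\mathcal{H}\setminus x$ with $|F(\mathcal{B})|=d'_{\mathcal{H}\setminus x}$. Then $R(\mathcal{B})$ is an independent set in $\mathcal{H}\setminus x$, so $R(\mathcal{B})$ is a face of $\Delta_{\mathcal{H}\setminus x}$, and we can extend it to a facet $F$ of $\Delta_{\mathcal{H}\setminus x}$.

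Next I invoke the shedding hypothesis (as noted in the paragraph following the definition of shedding vertex in the paper): every facet of $\Delta_{\mathcal{H}\setminus x}$ must contain $E\setminus\{x\}$ for some edge $E$ of $\mathcal{H}$ with $x\in E$. Choose such an $E$ for our $F$, and let $B_{n+1}$ be the single-stem bouquet with stem $E$, roots $E\setminus\{x\}$ and unique flower $x$. Set $\mathcal{B}'=\mathcal{B}\cup\{B_{n+1}\}$; then
\[
R(\mathcal{B}')=R(\mathcal{B})\cup(E\setminus\{x\})\subseteq F.
\]

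The key verification is that $\mathcal{B}'$ is still semi-strongly disjoint in $\mathcal{H}$, i.e.\ that $R(\mathcal{B}')$ is independent in $\mathcal{H}$, not merely in $\mathcal{H}\setminus x$. Since $R(\mathcal{B}')\subseteq F$, it suffices to show $F$ is independent in $\mathcal{H}$. An edge $E'\in\mathcal{E}(\mathcal{H})$ with $x\notin E'$ lies in $\mathcal{E}(\mathcal{H}\setminus x)$, so it is not contained in $F$ by independence of $F$ in $\mathcal{H}\setminus x$; and an edge $E'\in\mathcal{E}(\mathcal{H})$ with $x\in E'$ cannot be contained in $F$ since $x\notin V(\mathcal{H}\setminus x)\supseteq F$. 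This is the main (and really only) delicate point of the argument.

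Finally, $F(\mathcal{B}')=F(\mathcal{B})\cup\{x\}$, and $x\notin F(\mathcal{B})$ because $F(\mathcal{B})\subseteq V(\mathcal{H}\setminus x)$, so $|F(\mathcal{B}')|=|F(\mathcal{B})|+1=d'_{\mathcal{H}\setminus x}+1$. Therefore $d'_{\mathcal{H}}\geq |F(\mathcal{B}')|=d'_{\mathcal{H}\setminus x}+1$, as required. The one edge case worth a sentence is $E=\{x\}$ (a loop at $x$), where we interpret $B_{n+1}$ as having empty root set and flower $\{x\}$; this fits the paper's definition for a single-edge bouquet and does not affect the count.
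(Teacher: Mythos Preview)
Your proof is correct and in fact more direct than the paper's. Both arguments start from a semi-strongly disjoint family $\mathcal{B}$ in $\mathcal{H}\setminus x$ realizing $d'_{\mathcal{H}\setminus x}$ and seek to adjoin a single-stem bouquet involving $x$. The paper argues by contradiction: it supposes that for every edge $E_i\ni x$ some vertex $y_i\in E_i\setminus\{x\}$ fails to keep $R(\mathcal{B})\cup\{y_i\}$ independent, builds an auxiliary set $S=\bigcup_i(E'_i\setminus\{y_i\})\subseteq R(\mathcal{B})$, shows $S$ is independent in $\mathcal{H}/x$, and then derives a contradiction with the shedding property by passing through facets of $\Delta_{\mathcal{H}/x}$ and $\Delta_{\mathcal{H}\setminus x}$. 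Having found a suitable $E_i$, the paper adds it with a \emph{single} root $y_i$ and flowers $E_i\setminus\{y_i\}$.

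You bypass the contradiction entirely: you extend $R(\mathcal{B})$ directly to a facet $F$ of $\Delta_{\mathcal{H}\setminus x}$ and invoke the observation (recorded in the paper right after the definition of shedding vertex) that such a facet must contain $E\setminus\{x\}$ for some edge $E\ni x$. You then add $E$ with roots $E\setminus\{x\}$ and the single flower $x$. Your choice of roots is larger than the paper's, but since $R(\mathcal{B}')\subseteq F$ and $F$ is automatically independent in $\mathcal{H}$ (as $x\notin F$), semi-strong disjointness is immediate. The paper's route yields potentially more new flowers but needs more work to secure independence of the (smaller) root set; your route yields exactly one new flower, which is all the lemma requires, and gets independence for free from the facet containment. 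Both are valid; yours is the cleaner argument.
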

\begin{proof}
Suppose that $\mathcal{B}=\{B_1,\ldots,B_n\}$ is a semi-strongly disjoint set of bouquets of $\mathcal{H}\setminus x$ such that $d'_{\mathcal{H}\setminus x}=|F(\mathcal{B})|$. Also, suppose that $\{E_1,\ldots,E_k\}$ is the set of all edges of $\mathcal{H}$ containing $x$. Now, we are going to prove that there is an integer $1\leq i\leq k$ such that for each $y_i\in E_i\setminus \{x\}$, $R(\mathcal{B})\cup \{y_i\}$ is an independent set of vertices in $\mathcal{H}$. If we prove this claim, then  by choosing this $E_i$ as a bouquet of $\mathcal{H}$ with one stem $E_i$ and some $y_i\in E_i\setminus \{x\}$ as its root, $\mathcal{B'}=\mathcal{B}\cup \{E_i\}$ forms a semi-strongly disjoint set of bouquets of $\mathcal{H}$ with $|F(\mathcal{B'})|\geq d'_{H\setminus x}+1$, since $x\in F(E_i)\setminus V(\mathcal{B})$. This completes the proof. To prove the claim suppose in contrary that for each $1\leq i\leq k$, there is a vertex $y_i\in E_i\setminus \{x\}$ and an edge $E'_i$ of $\mathcal{H}$ containing $y_i$ such that $E'_i\setminus \{y_i\}\subseteq R(\mathcal{B})$. Note that if for two distinct integers $1\leq i,j\leq k$, $y_i=y_j$, then one can choose $E'_i=E'_j$. Also, if $y_i\neq y_j$, then we should have $\{y_i,y_j\}\not\subseteq E_i'$ and $\{y_i,y_j\}\not\subseteq E_j'$. Because otherwise, if for instance $\{y_i,y_j\}\subseteq E_i'$, then $y_j\in E_i'\setminus \{y_i\}\subseteq R(\mathcal{B})$. Hence $E'_j\setminus \{y_j\}\subseteq R(\mathcal{B})$ insures that $E'_j\subseteq R(\mathcal{B})$ which contradicts to independence of $R(\mathcal{B})$ in $\mathcal{H}\setminus x$.
Now, if we set $S=\bigcup_{i=1}^k (E_i'\setminus \{y_i\})$, then $S$ is an independent set of vertices in $\mathcal{H}/x$. To be more precise, if $S$ is not an independent set of vertices in $\mathcal{H}/x$, there is an edge $E\setminus \{x\}$ of $\mathcal{H}/x$ contained in $S$. If $x\in E$, then $E=E_j$ for some $1\leq j\leq k$. Hence, $y_j\in E\setminus \{x\}\subseteq S$, which implies that $\{y_i, y_j\}\subseteq E'_i$ for some integer $1\leq i\leq k$ with $i\neq j$ which is a contradiction. So, $E$ is an edge of $\mathcal{H}\setminus x$ which is impossible, because $E\subseteq S\subseteq R(\mathcal{B})$ and $R(\mathcal{B})$ is independent in $\mathcal{H}\setminus x$.
Now, we extend $S$ to a facet $F$ of $\Delta _{\mathcal{H}/x}$. $F$ is a face of $\Delta _{\mathcal{H}\setminus x}$ and so it is contained in a facet $G$ of $\Delta _{\mathcal{H}\setminus x}$. Now, since $x$ is a shedding vertex, $G$ should contain $E_i\setminus \{x\}$ for some $1\leq i\leq k$. Hence, $E'_i\subseteq G$ which contradicts the fact that $G$ is a facet of $\Delta _{\mathcal{H}\setminus x}$. This proves our claim and so completes the proof.
\end{proof}

Now, we are ready to state our main result of this section which is a generalization of Proposition 2.6 in \cite{KM}.
\begin{theorem}(Compare \cite[Proposition 2.6]{KM}.)\label{pd}
Assume that $\mathcal{H}$ is a vertex decomposable hypergraph. Then $$\T{pd}(R/I_{\Delta_\mathcal{H}})\leq d'_\mathcal{H}.$$
\end{theorem}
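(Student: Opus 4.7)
The plan is to induct on $|V(\mathcal{H})|$, using the shedding vertex supplied by the vertex decomposition together with the two preceding lemmas, which already contain the combinatorial heart of the argument.

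For the inductive step, pick a shedding vertex $x$ of $\mathcal{H}$, and write $\Delta=\Delta_\mathcal{H}$, $\Delta_1=\Delta_{\mathcal{H}\setminus x}$, $\Delta_2=\Delta_{\mathcal{H}/x}$, noting that both $\mathcal{H}\setminus x$ and $\mathcal{H}/x$ inherit vertex decomposability. The standard short exact sequence
\[
0 \lo R/(I_\Delta : x)(-1) \st{\cdot x}{\lo} R/I_\Delta \lo R/(I_\Delta,x) \lo 0
\]
will drive the argument. For squarefree monomial ideals one has $(I_\Delta:x) = I_{\Delta_2} R$ and $(I_\Delta,x)=I_{\Delta_1}R + (x)$, so by the change-of-rings principle (adding/killing the variable $x$) one obtains
\[
\pd_R\bigl(R/(I_\Delta:x)\bigr) = \pd_{R'}(R'/I_{\Delta_2}), \qquad
\pd_R\bigl(R/(I_\Delta,x)\bigr) = \pd_{R'}(R'/I_{\Delta_1}) + 1,
\]
where $R'=k[V(\mathcal{H})\setminus\{x\}]$. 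Feeding these into the long exact Tor sequence arising from the displayed SES yields the recursion
\[
\pd(R/I_{\Delta_\mathcal{H}}) \;\leq\; \max\bigl\{\pd(R/I_{\Delta_{\mathcal{H}/x}}),\ \pd(R/I_{\Delta_{\mathcal{H}\setminus x}})+1\bigr\}.
\]

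Now apply the inductive hypothesis to each of $\mathcal{H}\setminus x$ and $\mathcal{H}/x$, which have strictly fewer vertices, to bound the right-hand side by $\max\{d'_{\mathcal{H}/x},\ d'_{\mathcal{H}\setminus x}+1\}$. Lemma \ref{1} gives $d'_{\mathcal{H}/x}\leq d'_\mathcal{H}$ and Lemma \ref{2} gives $d'_{\mathcal{H}\setminus x}+1\leq d'_\mathcal{H}$, so the maximum is at most $d'_\mathcal{H}$, closing the induction. The base case ($\mathcal{H}$ has no edges, or a single vertex) is immediate since then $R/I_{\Delta_\mathcal{H}}$ is free (or zero-length).

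I do not anticipate a real obstacle in this theorem itself; the technical content has been pushed into Lemmas \ref{1} and \ref{2}, which handle how the semi-strongly disjoint bouquet invariant $d'$ behaves under deletion and contraction at a shedding vertex. The only subtlety to state carefully is the change-of-rings step, and in particular the $+1$ on the deletion side — this is what forces the use of Lemma \ref{2} (with its strict increase $d'_{\mathcal{H}\setminus x}+1\leq d'_\mathcal{H}$) rather than a trivial monotonicity statement, and it is precisely the shedding hypothesis on $x$ that makes that strict inequality available.
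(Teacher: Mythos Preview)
Your proof is correct and follows essentially the same inductive strategy as the paper: reduce to $\mathcal{H}\setminus x$ and $\mathcal{H}/x$ via a shedding vertex, then close with Lemmas~\ref{1} and~\ref{2}. The only difference is that the paper invokes \cite[Corollary 2.10]{MK} to obtain the \emph{equality} $\pd(R/I_\Delta)=\max\{\pd(R/I_{\Delta_1})+1,\pd(R/I_{\Delta_2})\}$ (which uses the shedding hypothesis), whereas you derive the corresponding \emph{inequality} directly from the short exact sequence and change-of-rings; since only the inequality is needed, your more self-contained route is perfectly adequate.
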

\begin{proof}
We proceed by induction on $|V(H)|$.  If $|V(\mathcal{H})|=2$, the result is clear. Suppose, inductively, that the result has been proved for smaller values of $|V(\mathcal{H})|$. Assume that $x$ is a shedding vertex of $\mathcal{H}$. Let $\Delta=\Delta_{\mathcal{H}}$, $\Delta_1=\Delta_{\mathcal{H}\setminus x}$ and $\Delta_2=\Delta_{\mathcal{H}/x}$. Then $\mathcal{H}\setminus x$ and $\mathcal{H}/x$ are vertex decomposable hypergraphs and no facet of $\Delta_2$ is a facet of $\Delta_1$.
 By inductive hypothesis we have
$$\T{pd}(R/I_{\Delta_1})\leq d'_{\mathcal{H}\setminus x} \ \T{and} \ \T{pd}(R/I_{\Delta_2})\leq d'_{\mathcal{H}/x}.$$
On the other hand, by Corollary 2.10 in \cite{MK}, we have the equality
$$\T{pd}(R/I_{\Delta})=\max\{\T{pd}(R/I_{\Delta_1})+1,\T{pd}(R/I_{\Delta_2})\}.$$
Hence
$$\T{pd}(R/I_{\Delta})\leq \max\{d'_{\mathcal{H}\setminus x}+1,d'_{\mathcal{H}/x}\}.$$
Now, the result immediately follows from Lemmas \ref{1} and \ref{2}.
\end{proof}

We end this paper by the following result, which is a generalization of Proposition 2.7 and Corollary 2.9 in \cite{KM} and can characterize the projective dimension of certain hypergraphs in special circumstances.
\begin{theorem} (Compare \cite[Proposition 2.7 and Corollary 2.9]{KM}.)\label{final}
Assume that $\mathcal{B}=\{B_1,\ldots,B_n\}$ is a semi-strongly disjoint set of bouquets of $\mathcal{H}$ such that $d'_{\mathcal{H}}=|F(\mathcal{B})|$. Then
\begin{itemize}
\item[(i)] there exists a minimal vertex cover of $\mathcal{H}$ contained in $F(\mathcal{B})$.
\item[(ii)] If moreover all edges of $S(\mathcal{B})$ has cardinality two (specially if $\mathcal{H}$ is a graph), then $F(\mathcal{B})$ is a minimal vertex cover of $\mathcal{H}$ and so $$c_\mathcal{H}\leq d_\mathcal{H}\leq d'_\mathcal{H}\leq\T{bigheight}(I(\mathcal{H}))\leq \T{pd}(R/I(\mathcal{H})).$$
\item[(iii)] If $H$ is a vertex decomposable hypergraph and all edges of $S(\mathcal{B})$ has cardinality two (specially if $\mathcal{H}$ is a graph), then $$\T{bigheight}(I(\mathcal{H}))=\T{pd}(R/I(\mathcal{H}))=d'_\mathcal{H}.$$
\end{itemize}
\end{theorem}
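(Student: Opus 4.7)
The plan is to prove the three parts in sequence, with the maximality of $|F(\mathcal{B})|$ driving (i), a disjointness lemma for (ii), and the upper bound of Theorem \ref{pd} closing the loop in (iii).

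For part (i), I would show that $T:=V(\mathcal{H})\setminus F(\mathcal{B})$ is an independent set in $\mathcal{H}$. Once this is established, any maximal independent set $S\supseteq T$ gives $V(\mathcal{H})\setminus S\subseteq F(\mathcal{B})$, and this complement is a minimal vertex cover. Suppose for contradiction that some edge $E$ of $\mathcal{H}$ lies inside $T$. The case $E\subseteq R(\mathcal{B})$ is excluded at once by independence of $R(\mathcal{B})$. If $E$ is a stem of some $B_i\in\mathcal{B}$, then because $R(B_i)$ is a proper subset of $E$, the set $E\setminus R(B_i)\subseteq F(B_i)\subseteq F(\mathcal{B})$ is nonempty, contradicting $E\cap F(\mathcal{B})=\emptyset$. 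Otherwise, I would augment $\mathcal{B}$ by a well-chosen new bouquet to enlarge $|F(\mathcal{B})|$, contradicting maximality: if $E\cap R(\mathcal{B})\neq\emptyset$, use a single-stem bouquet with stem $E$, root $E\cap R(\mathcal{B})$ and flowers $E\setminus R(\mathcal{B})$; if $E\cap R(\mathcal{B})=\emptyset$ and some $r\in E$ makes $R(\mathcal{B})\cup\{r\}$ independent, take root $\{r\}$ and flowers $E\setminus\{r\}$; and in the delicate remaining case each $r\in E$ admits an edge $F_r\ni r$ with $F_r\setminus\{r\}\subseteq R(\mathcal{B})$, so a single-stem bouquet with stem $F_r$, root $F_r\setminus\{r\}$ and flower $r$ again enlarges $F(\mathcal{B})$ without changing $R(\mathcal{B})$.

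For part (ii), the key observation is that $R(\mathcal{B})\cap F(\mathcal{B})=\emptyset$: if $w$ were both a root of $B_i$ and a flower of $B_j$, the stem of $B_j$ containing $w$ would have the form $\{w,r\}$ with $r$ a root of $B_j$, placing the edge $\{w,r\}$ inside $R(\mathcal{B})$ and violating independence. From (i), $F(\mathcal{B})$ contains a vertex cover and is therefore itself a vertex cover; its minimality is witnessed for each $v\in F(\mathcal{B})$ by a stem $\{v,r\}$ of some $B_i$ with $r$ a root (hence $r\notin F(\mathcal{B})$), yielding an edge of $\mathcal{H}$ meeting $F(\mathcal{B})$ only in $\{v\}$. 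The chain $c_\mathcal{H}\leq d_\mathcal{H}\leq d'_\mathcal{H}$ is Proposition \ref{cd}(i); $d'_\mathcal{H}=|F(\mathcal{B})|\leq\T{bigheight}(I(\mathcal{H}))$ because minimal primes of $I(\mathcal{H})$ correspond to minimal vertex covers of $\mathcal{H}$ and $F(\mathcal{B})$ is one such; and $\T{bigheight}(I(\mathcal{H}))\leq\T{pd}(R/I(\mathcal{H}))$ is the standard inequality for squarefree monomial ideals, following from Auslander--Buchsbaum together with the fact that every minimal prime is associated.

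Part (iii) then follows at once: Theorem \ref{pd} gives $\T{pd}(R/I_{\Delta_\mathcal{H}})\leq d'_\mathcal{H}$, while (ii) supplies $d'_\mathcal{H}\leq\T{bigheight}(I(\mathcal{H}))\leq\T{pd}(R/I(\mathcal{H}))$, forcing all three quantities to agree. The principal obstacle is the delicate sub-case of (i) in which $E\cap R(\mathcal{B})=\emptyset$ yet no singleton in $E$ can be appended to $R(\mathcal{B})$ while preserving independence; the $F_r$-bouquet trick handles it, under the mild hypothesis that $\mathcal{H}$ has no singleton edges. Once that combinatorial move on $\mathcal{B}$ is set up cleanly, the remainder of the argument is routine verification.
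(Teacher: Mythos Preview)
Your approach is essentially the same as the paper's. For (i) you show $F(\mathcal{B})$ is a vertex cover by contradiction via augmenting $\mathcal{B}$, which is exactly the paper's strategy; the paper simply says ``by adding the edge $E_i$ to $\mathcal{B}$ (as a stem or a bouquet with one stem)'' and then extracts a minimal cover greedily, whereas you spell out the three sub-cases of the augmentation and pass to a minimal cover by complementing a maximal independent set---your version is more carefully justified but not different in substance. For (ii) the paper dispatches minimality with ``can be easily seen by (i)'' and cites Proposition~\ref{cd} and \cite[Corollary 3.33]{MVi} for the chain of inequalities, while you supply the actual argument (the $R(\mathcal{B})\cap F(\mathcal{B})=\emptyset$ observation and the witnessing stem $\{v,r\}$); again this fills in what the paper leaves implicit. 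Part (iii) is identical in both.
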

\begin{proof}
\begin{itemize}
\item[(i)] Suppose that $E(\mathcal{H})\setminus S(\mathcal{B})=\{E_1, \dots, E_k\}$. Then for each $1\leq i\leq k$, $E_i\cap F(\mathcal{B})\neq\emptyset$. Because otherwise, if $E_i\cap F(\mathcal{B})=\emptyset$, then by adding the edge $E_i$ to $\mathcal{B}$ (as an stem or a bouquet with one stem), one can get to a semi-strongly disjoint set of bouquets of $\mathcal{H}$ with more flowers than $d'_{\mathcal{H}}$ which is impossible. Now, let $S(\mathcal{B})=\{E_{k+1}, \dots, E_t\}$ and set $S_0=\emptyset$. For each $1\leq i \leq t$, if $E_i\cap S_{i-1}\neq \emptyset$, then set $S_i:=S_{i-1}$, else choose a vertex $x_i\in E_i\cap F(\mathcal{B})$ and set $S_i:=S_{i-1}\cup \{x_i\}$. It can be easily seen that $S_t$ is a minimal vertex cover of $\mathcal{H}$ contained in $F(\mathcal{B})$ as desired.
\item[(ii)] The first statement can be easily seen by (i). The inequalities can be gained by Proposition \ref{cd}, the first statement and \cite[Corollary 3.33]{MVi}.
\item[(iii)] follows from (ii) and Theorem \ref{pd}.
\end{itemize}

\end{proof}

%\begin{acknowledgement}
%The authors are deeply grateful to the
%referee for careful reading of the paper. The research of the second author was in part supported by a grant from IPM (No. 94130021).
%\end{acknowledgement}

\providecommand{\bysame}{\leavevmode\hbox
to3em{\hrulefill}\thinspace}

\end{document}